\newcommand{\bburl}[1]{\textcolor{blue}{\url{#1}}}
\newcommand{\burl}[1]{\textcolor{blue}{\url{#1}}}
\numberwithin{equation}{section}
\newtheorem{thm}{Theorem}[section]
\theoremstyle{plain}
\newtheorem{proposition}[thm]{Proposition}
\newtheorem{theorem}[thm]{Theorem}
\newcommand\be{\begin{equation}}
\newcommand\ee{\end{equation}}
\newcommand\bee{\begin{equation*}}
\newcommand\eee{\end{equation*}}
\newcommand\bea{\begin{eqnarray}}
\newcommand\eea{\end{eqnarray}}
\newcommand\beae{\begin{eqnarray*}}
\newcommand\eeae{\end{eqnarray*}}
\newcommand\bi{\begin{itemize}}
\newcommand\ei{\end{itemize}}
\newcommand\ben{\begin{enumerate}}
\newcommand\een{\end{enumerate}}
\newcommand\bc{\begin{center}}
\newcommand\ec{\end{center}}
\newcommand\ba{\begin{array}}
\newcommand\ea{\end{array}}
\newcommand\frakfamily{\usefont{U}{yfrak}{m}{n}}
\DeclareTextFontCommand{\textfrak}{\frakfamily}
\newcommand{\hr}[1]{\href{#1}{\url{#1}}}
\title[]{On Algorithms to Calculate Integer Complexity}
\author[]{Katherine Cordwell}
\email{\textcolor{blue}{\href{mailto:kcordwel@cs.cmu.edu}{kcordwel@cs.cmu.edu}}}
\address{Department of Mathematics, University of Maryland, College Park, MD 20742}
\author[]{Alyssa Epstein}
\email{\textcolor{blue}{\href{mailto:alye@stanford.edu}{alye@stanford.edu}}}
\address{Department of Mathematics and Statistics, Williams College, Williamstown, MA 01267}
\author[]{Anand Hemmady}
\email{\textcolor{blue}{\href{mailto:ash6@williams.edu}{ash6@williams.edu}}}
\address{Department of Mathematics and Statistics, Williams College, Williamstown, MA 01267}
\author[]{Steven J. Miller}
\email{\textcolor{blue}{\href{mailto:sjm1@williams.edu}{sjm1@williams.edu}},  \textcolor{blue}{\href{Steven.Miller.MC.96@aya.yale.edu}{Steven.Miller.MC.96@aya.yale.edu}}}
\address{Department of Mathematics and Statistics, Williams College, Williamstown, MA 01267}
\author[]{Eyvindur Palsson}
\email{\textcolor{blue}{\href{mailto:palsson@vt.edu}{palsson@vt.edu}}}
\address{Department of Mathematics, Virginia Polytechnic Institute and State University, VA 24061}
\author[]{Aaditya Sharma}
\email{\textcolor{blue}{\href{mailto:as2718@cam.ac.uk}{as2718@cam.ac.uk}}}
\address{Department of Mathematics and Statistics, Williams College, Williamstown, MA 01267}
\author[]{Stefan Steinerberger}
\email{\textcolor{blue}{\href{mailto:stefan.steinerberger@yale.edu}{stefan.steinerberger@yale.edu}}}
\address{Department of Mathematics, Yale University, CT 06510}
\author[]{Yen Nhi Truong Vu}
\email{\textcolor{blue}{\href{mailto:ytruongvu17@amherst.edu}{ytruongvu17@amherst.edu}}}
\address{Department of Mathematics, Amherst College, Amherst, MA 01002}
\thanks{}
\subjclass[2010]{60B10, 11B39, 11B05  (primary) 65Q30 (secondary)}
\keywords{Integer Complexity}
\date{\today}
\begin{document}

\begin{abstract} 
 We consider a problem first proposed by Mahler and Popken in 1953 and later developed by Coppersmith, Erd\H{o}s, Guy, Isbell, Selfridge, and others.  Let $f(n)$ be the complexity of $n \in \mathbb{Z^{+}}$, where $f(n)$ is defined as the least number of $1$'s needed to represent $n$ in conjunction with an arbitrary number of $+$'s, $*$'s, and parentheses.  Several algorithms have been developed to calculate the complexity of all integers up to $n$.  Currently, the fastest known algorithm runs in time $\mathcal{O}(n^{1.230175})$ and was given by J. Arias de Reyna and J. van de Lune in 2014.  This algorithm makes use of a recursive definition given by Guy and iterates through products, $f(d) + f\left(\frac{n}{d}\right)$, for $d \ |\ n$, and sums, $f(a) + f(n - a)$, for $a$ up to some function of $n$.  The rate-limiting factor is iterating through the sums.  We discuss potential improvements to this algorithm via a method that provides a strong uniform bound on the number of summands that must be calculated for almost all $n$.  We also develop code to run J. Arias de Reyna and J. van de Lune's analysis in higher bases and thus reduce their runtime of $\mathcal{O}(n^{1.230175})$ to $\mathcal{O}(n^{1.222911236})$.  All of our code can be found online at: https://github.com/kcordwel/Integer-Complexity.
\end{abstract}

\maketitle



\section{Introduction}
\subsection{Introduction}
In this paper, $\log$ denotes $\ln$, and $\log_b$ denotes the logarithm in base b.  Given $n \in \mathbb{N}$, the complexity of $n$, which we denote $f(n)$, is defined as the least number of 1's needed to represent $n$ using an arbitrary number of additions, multiplications, and parentheses.  For example, because 6 may be represented as $(1 + 1)(1 + 1 + 1)$, $f(6) \leq 5$.
Calculating $f(n)$ for arbitrary $n$ is a problem that was posed in 1953 by Mahler and Popken \cite{MP}.  Guy \cite{G} drew attention to this problem in 1986 when he discussed it and several other simply stated problems in an Am. Math. Monthly article.  The following recursive expression for integer complexity highlights the interplay of additive and multiplicative structures:

\begin{equation}
 f(n) = \min\limits_{\substack{d\ |\ n \\ 2 \leq d \leq \sqrt{n} \\ 1 \leq a \leq n/2}} \left\lbrace f(d) + f\left(\frac{n}{d}\right), f(a) + f(n - a)\right\rbrace.
\end{equation}

Some unconditional bounds on $f(n)$ are known.  In particular, \cite{G} attributes a lower bound of $f(n) \geq 3\log_3(n)$ to Selfridge.  
Also, an upper bound of $f(n) \leq 3\log_2(n)$ is attributed to Coppersmith.  Extensive numerical investigation (see \cite{IBCOOP}) suggests that $ f(n) \sim 3.3 \log_{3}(n)$ for $n$ large
but it is not even known whether $f(n) \geq (3+\varepsilon_0) \log_{3}{n}$ for some $\varepsilon_0 > 0$. As a step towards understanding these problems Altman and Zelinsky \cite{AZ} introduced the discrepancy $\delta(n) = f(n) - 3\log_3(n)$ and provided a way to classify those numbers with a small discrepancy. This classification was taken further by Altman \cite{A1,A2} where he obtained a finite set of polynomials that represent precisely the numbers with small defects. As a consequence Altman \cite{A3} was able to calculate the integer complexity of certain classes of numbers. Any progress on these difficult questions likely requires a substantial new idea; the main difficulty,
the interplay between additive and multiplicative structures, is at the core of a variety of different open problems, which we believe adds to its allure.

\subsection{Algorithms.} Much of the progress on this problem has been algorithmic.  Using the above recursive definition, it is possible to write algorithms to calculate $f(n)$ for large values of $n$ where the rate-limiting step of the algorithm is iterating through the summands, $f(a) + f(n-a)$, for many values of $a$.  In particular, the brute-force algorithm that iterates over all $a's$ such that $1 \leq a \leq n/2$ runs in time $\mathcal{O}{(n^2)}$, but there are ways to bound the number of summands that must be checked so as to significantly decrease the computational complexity.  Srinivar and Shankar \cite{SS} used the unconditional upper and lower bounds on $f(n)$ to bound the number of summands, obtaining an algorithm that runs in time $\mathcal{O}(n^{\log_2(3)}) < \mathcal{O}(n^{1.59})$.\\

The fastest known algorithm runs in time $\mathcal{O}(n^{1.230175})$ and is due to J. Arias de Reyna and J. van de Lune \cite{AV}.  Also, the experimental data in \cite{IBCOOP} is based on an algorithm that calculates $f(n)$ for $n$ up to $n = 10^{12}$.  They derive many interesting results from their data, but they do not analyze the runtime of their algorithm.
We obtain both an overall improvement on the runtime of the J. Arias de Reyna and J. van de Lune algorithm and a potential internal improvement to the workings of the algorithm.  The overall improvement is derived from running the analysis of \cite{AV} in much higher bases, while the internal improvement gives a strong uniform bound on the number of summands $f(a) + f(n - a)$ that must be calculated for almost all $n$.  We detail the overall improvement in Section \ref{coding section}.  We introduce the potential internal improvement in Section \ref{internal section} and test it in Section \ref{empirical section}.  We end the paper by proposing a new approach for improving the current unconditional upper bound on $f(n)$.

\section{Algorithmic aspects}
\label{coding section}
\subsection{The de Reyna \& van de Lune algorithm.}
J. Arias de Reyna and J. van de Lune \cite{AV} developed code in Python to perform the analysis of their algorithm, which they have generously shared with us.  Additionally, Fuller has published open-source code \cite{F} written in C to calculate integer complexities.  Using these, we have developed code\footnote{See the \text{``calculate\_complexities.c''} file at https://github.com/kcordwel/Integer-Complexity} in C that is comparable to J. Arias de Reyna and J. van de Lune's Python code.  The heart of the code is the calc\_count method, which calculates $D(b, r)$ for varying values of $b$ and $r$, where $D(b,r)$ is an upper bound on how much multiplying by $b$ and adding $r$ increases the complexity of any given number. More precisely, we define $D(b, r)$ to be the smallest integer such that
\begin{equation}
f(r+bn) \leq f(n) + D(b,r)
\end{equation} for all $n$.  As an example, notice that $D(b, 0) \leq f(b)$, because we can always represent $b$ with $f(b)$ 1's and $n$ with $f(n)$ 1's and then multiply these two representations to achieve a representation of $bn$---and thus $f(bn) \leq f(n) + f(b)$.  Similarly, $D(1, r) \leq f(r)$ because we can represent $r$ with $f(r)$ 1's and $n$ with $f(n)$ 1's, and then add these two representations to achieve a representation of $n + r$ that uses $f(n) + f(r)$ 1's.

These integers $D(b, r)$ are useful for bounding $f(n)$ in the following way: \cite{AV} defined $C_{avg}$ as the infimum of all $C$ such that $f(n) \leq C\log(n)$ for a set of natural numbers of density 1 and showed that 
\begin{equation} 
C_{avg} \leq \frac{1}{b\log(b)}\sum_{r=0}^{b-1}D(b, r).
\end{equation}
In this calculation, we refer to $b$ as the base in which we are working.  
Our code closely follows the logic of J. Arias de Reyna and J. van de Lune's program, making the following slight optimization.

\begin{theorem} Take $b = 2^i3^j$ where $b < 10^{12}$ and $i + j > 0$.  If $r\ |\ b$ for $2 \leq r < b$, then $D(b, r) = f(b) + 1$.
\end{theorem}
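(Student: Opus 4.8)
The plan is to use that $D(b,r) = \sup_{n\ge 1}\bigl(f(bn+r) - f(n)\bigr)$ and to prove the two matching inequalities $D(b,r) \le f(b)+1$ and $D(b,r) \ge f(b)+1$ separately. The single structural fact driving everything is the hypothesis $r \mid b$: writing $m = b/r \ge 2$, we have $bn+r = r(mn+1)$, so every term $f(bn+r)$ factors through a multiplication by $r$ together with the addition of a single $1$ to the multiple $mn$. I will also use that, since $b = 2^i3^j < 10^{12}$, the exact values $f(2^i3^j) = 2i+3j$ are known throughout this range (they are part of the exhaustive data of \cite{IBCOOP}). In particular, for the factorization $b = rm$ into $3$-smooth parts this yields the exact additivity $f(r) + f(m) = f(b)$, which I will need because submultiplicativity by itself only gives the one-sided bound $f(b) \le f(r) + f(m)$.

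For the upper bound I would chain four elementary estimates. Starting from $f(bn+r) = f\bigl(r(mn+1)\bigr)$, submultiplicativity gives $f\bigl(r(mn+1)\bigr) \le f(r) + f(mn+1)$; appending a $+1$ gives $f(mn+1) \le f(mn) + 1$; submultiplicativity again gives $f(mn) \le f(m) + f(n)$; and the additivity recorded above gives $f(r) + f(m) = f(b)$. Combining these, $f(bn+r) \le f(b) + f(n) + 1$ for every $n$, which is precisely the statement $D(b,r) \le f(b)+1$.

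The hard part will be the lower bound, namely producing a single $n$ with $f(bn+r) \ge f(n) + f(b) + 1$, since this is a genuine lower bound on integer complexity and such bounds are notoriously hard to establish. My strategy is to force the four inequalities above to be simultaneously tight: I would search for $n$ with $p := mn+1$ prime and such that (a) $f(p) = f(mn)+1$, so the appended $1$ is unavoidable, and (b) $f(rp) = f(r)+f(p)$, so the multiplication by $r$ is unavoidable; then the chain collapses to an equality and $f(bn+r) = f(b)+f(n)+1$. Primes $p \equiv 1 \pmod m$ are plentiful, so candidate $n$ are easy to generate, but neither (a) nor (b) is automatic — a prime may admit a representation cheaper than $f(\,\cdot\,)+1$, and products need not be complexity-additive — so I do not expect a clean closed-form witness valid for all admissible $(b,r)$. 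Indeed, a direct check shows $n=2$ witnesses the bound for $(b,r)=(6,2),(6,3),(12,2)$ but fails for $(12,3)$, where one must instead take $n=3$. The finiteness built into the hypothesis $b<10^{12}$ is therefore essential to the argument: there are only finitely many $3$-smooth $b$ in range, each with finitely many proper divisors $r$, and for each such pair a witness $n$ can be located and its complexity verified against the tabulated values of \cite{IBCOOP}, which is how I would close out the lower bound and hence the equality.
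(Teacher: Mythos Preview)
Your upper bound is exactly the paper's: factor $bn+r=r(mn+1)$ with $m=b/r$, apply submultiplicativity and the $+1$ bound, and close with the additivity $f(r)+f(m)=f(b)$ for $3$-smooth numbers below $10^{12}$ from \cite{IBCOOP}. Nothing to add there.

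For the lower bound your high-level plan --- a finite computer search over the admissible pairs $(b,r)$ --- is also what the paper does, but two points are worth flagging. First, your structural heuristic is incomplete: you propose forcing tightness in the chain via (a) $f(mn+1)=f(mn)+1$ and (b) $f(r(mn+1))=f(r)+f(mn+1)$, but the chain has a \emph{third} inequality, $f(mn)\le f(m)+f(n)$, and unless that one is also tight the argument only yields $f(bn+r)=f(r)+f(mn)+1\le f(b)+f(n)+1$, which is the wrong direction. Your sample witnesses ($n=2,3$) happen to satisfy $f(mn)=f(m)+f(n)$ because everything is $3$-smooth, but you never state or use this; if you intend to search among $3$-smooth $n$ so that the formula $f(2^a3^b)=2a+3b$ handles this automatically, you should say so. Second, the primality of $mn+1$ and conditions (a), (b) are only \emph{sufficient} for a witness, and the paper dispenses with them entirely: it simply tries $n=1$ and checks whether $f(b+r)$ already hits $f(b)+1$, which it does for all but $372$ pairs, and for those it tries $n=2$ and verifies $f(2b+r)=f(b)+3$. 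That direct ``try small $n$ and look up the complexity'' search is both simpler to implement and easier to certify than hunting for primes in an arithmetic progression with two extra side conditions. Your fallback (``locate a witness and verify its complexity against the tables'') is correct and is effectively the same endpoint; the paper just gets there with less machinery.
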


\begin{proof} 
From the equality $r + bn = r(1 + n\cdot \frac{b}{r})$, notice that:
\begin{equation} f(bn + r) \leq 1 + f(r) + f\left(n\cdot \frac{b}{r}\right) \leq 1 + f(r) + f(n) + f(\frac{b}{r}) 
\end{equation}

From \cite{IBCOOP}, we know that $f(2^v3^w) = 2v + 3w$ for $2^v3^w < 10^{12}$ and $v + w > 0$.  We know that $b = 2^i3^j$, and since $r$ divides $b$, $r$ is of the form $2^x3^y$ for $x \leq i$, $y \leq j$.  This means that $b/r$ is of the form $2^{i - x}3^{j - y}$.  Since $r \geq 2,$ we have $x + y > 0$ and since $r < b$, we have $(i - x) + (j - y) > 0$.

Then applying the result of \cite{IBCOOP} to both $r$ and $\frac{b}{r}$, we obtain 
\begin{equation}
\begin{split}
 f(bn + r) &\leq 1 + f(r) + f(n) + f\left(\frac{b}{r}\right) \\
 &=  1 + 2x + 3y + f(n) + 2(i - x) + 3(i - y) \\
 &= 1 + 2i + 3j + f(n) \\
 &= 1 + f(b) + f(n).
\end{split}
\end{equation}
This shows that $D(b, r) \leq f(b) + 1$. 

Now we wish to argue that $D(b, r) \geq f(b) + 1$.  While this makes sense intuitively, in order to be rigorous we do a computer aided proof\footnote{See the code in the \text{``Thm2.1''} folder at https://github.com/kcordwel/Integer-Complexity}.  Our computer calculations work as follows:  First, we calculate the complexities of $b + r$ for all $b, r$ as in the theorem.  For the vast majority of the $b$, $r$, $f(b + r) = f(b) + 1$, meaning that $D(b, r) \geq f(b) + 1$ from the definition of $D(b, r)$.  However, there are 372 pairs of $b, r$ such that $f(b + r) \neq f(b) + 1$.  For these pairs, we do a second pass and calculate the complexities of $2b + r$.  For all of the pairs, $f(2b + r) = f(b) + 3 = f(b) + 1 + f(2)$, meaning that $D(b, r) \geq f(b) + 1$.   
\end{proof}

J. Arias de Reyna and J. van de Lune \cite{AV} suggest that their algorithms will be more powerful when implemented in C and Pascal.  \cite{AV} proved that their algorithm has running time $\mathcal{O}(N^{\alpha})$ where

\begin{equation}
\alpha = -1 + \frac{1}{\log b}\log\left(\sum_{d = 0}^{b-1} 3^{\frac{1}{3}D(b, d)}\right).
\end{equation}

They calculated the runtime of their algorithm for bases $2^n3^m$ up to 3188646, and found the best value of $\alpha$ as

$$\alpha = -1 + \frac{\log(48399164638047 +  3^{1/3}\cdot 33606823799088 + 3^{2/3}\cdot 23231513379231)}{\log(2^{10}\cdot 3^7)} \approx 1.230175$$
in base $2^{10}3^7 = 2239488$.  Using C is advantageous because it runs much faster than Python, and so we are able to calculate values for higher bases.  We calculated values for bases $2^n3^m \leq 57395628$.\footnote{After submission of this paper, we ran the code even longer, for bases $2^n3^m \leq 100663296$.  See the ``calculate\_complexities.txt'' file on GitHub for our data.}  In base $2^{13}3^8 = 53747712$, we find that the runtime is $\mathcal{O}(n^{\alpha})$ where 

$$\alpha = -1 + \frac{\log(50903564566217859 + 35271975106952037\cdot 3^{1/3} + 24493392174530898\cdot 3^{2/3})}{\log(2^{13}3^8)},$$
so that the runtime is $\mathcal{O}(n^{1.222911236}).$\footnote{Further, in base $80621568 = 2^{12}3^9$, we obtain $\mathcal{O}(n^{1.22188})$.  See the ``calculate\_complexities.txt'' file on GitHub for the exact numbers involved in this calculation.}

\subsection{Improved asymptotic results.} Probably Guy \cite{G} was the first who remarked that while pointwise bounds seem difficult, it is possible to establish bounds that are true for almost all (in the sense of asymptotic density 1) numbers. His method showed that $f(n) \leq 3.816 \log_{3}{n}$ for a subset of integers with density 1.

Using their definition of $C_{avg}$ as the infimum of all $C$ where $f(n) \leq C\log(n)$ for a set of natural numbers of density 1, \cite{AV} showed that for any base $b \geq 2$

\begin{equation}
C_{avg} \leq \frac{1}{b\log b}\sum_{r = 0}^{b-1} D(b, r).
\end{equation}

In base $b = 2^93^8,$ they obtain
\begin{equation} 
C_{avg} \leq \frac{166991500}{2^93^8\log(2^93^8)},
\end{equation}
so that $f(n) \leq 3.30808\log(n)$, or $f(n) \leq 3.63430\log_3(n)$,  for a set of natural numbers of density 1.

We find that in base $2^{11}3^9$,

\begin{equation} 
C_{avg} \leq \frac{2326006662}{2^{11}3^9\log(2^{11}3^9)},
\end{equation}
so that $f(n) \leq 3.29497 \log(n)$, or $f(n) \leq 3.61989\log_3(n)$, for a set of natural numbers of density 1.\footnote{Further, in base $80621568 = 2^{12}3^9$, we find $f(n) \leq 3.29180 \log(n)$, or $f(n) \leq 3.61642log_3(n)$.  See the ``calculate\_complexities.txt'' file for the exact numbers involved in this calculation.}

\section{Possible Improvements via Balancing Digits}
\label{internal section}
\subsection{Balancing Digits}
Our goal is to improve the algorithm for calculating complexity given in \cite{AV}. The rate-limiting factor in this algorithm is checking, for all $n \leq N$, $f(a) + f(n-a)$ for all $1 \leq a \leq \text{kMax}$ for some $\text{kMax}$ that is specially calculated for each $n$.  We will show that we can give a strong uniform bound on the number of summands that must be checked for almost all $n$.

We say that $n \in \mathbb{Z}$ is \textit{digit-balanced} in base $b$ if each of the digits $1, \dots, b - 1$ occurs roughly $1/b$ times in the base $b$ representation of $n$, or \textit{digit-unbalanced} if some digits occur significantly more often than others.  We will show that almost all numbers are digit-balanced, although the exact threshold of variation that we allow will depend on the base $b$.  Finally, assuming that we have a set $S$ of digit-balanced numbers in base $b$, we will use Guy's method to find that for any $n \in S$, $f(n) \leq c\log_3(n)$ for some $c$.  Then, using this bound on $f(n)$ and assuming that $f(n) = f(a) + f(n - a),$ we are able to bound $a$, which, in turn, narrows the search space that a reasonable algorithm has to cover.

\subsection{Bounds on Digit-Balanced Numbers}
Our main result is as follows.
\begin{proposition} There exists a constant $c_{b} > 0$ only depending on the base $b$ such that 
$$ \# \left\{ 1 \leq n \leq N: \max_{1 \leq i \leq b}{ \left| \frac{ \mbox{number of digits of}~n~\mbox{in base}~b~\mbox{that are i}}{\mbox{number of digits of}~n~\mbox{in base}~b} - \frac{1}{b} \right|} \geq \varepsilon \right\} \leq N^{1- c_b \varepsilon^2}.$$
\end{proposition}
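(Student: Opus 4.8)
The plan is to prove this as a large-deviation (Chernoff-type) estimate for base-$b$ digit frequencies: model the digits of a random integer as essentially independent and uniform, apply a concentration inequality to each digit value, union bound over the $b$ values, and then bootstrap from a fixed digit-length to all $1 \leq n \leq N$.

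First I would group the integers by their number of base-$b$ digits. Set $K = \lfloor \log_b N \rfloor + 1$, and for each $k \leq K$ examine the $k$-digit integers. Fixing $k$, I regard the digits $d_1,\dots,d_k$ of a uniformly chosen such integer as independent and uniform on $\{0,1,\dots,b-1\}$; the fact that the leading digit is nonzero, and the incomplete top block of integers between $b^{K-1}$ and $N$, are lower-order corrections that I would absorb at the end. For a fixed digit value $i$, the number of its occurrences is a sum $S_i = \sum_{j=1}^k \mathbf{1}[d_j = i]$ of indicators with mean $1/b$, so the frequency appearing in the proposition is $S_i/k$ with $\E{S_i/k} = 1/b$.

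Next I would apply Hoeffding's inequality (equivalently a Chernoff bound) coordinatewise. Since the indicators take values in $[0,1]$,
\begin{equation}
\PP{\left|\frac{S_i}{k} - \frac1b\right| \geq \varepsilon} \ \leq\ 2e^{-2k\varepsilon^2}.
\end{equation}
A union bound over all $b$ digit values shows that the proportion of $k$-digit integers that are digit-unbalanced at level $\varepsilon$ is at most $2b\,e^{-2k\varepsilon^2}$, so their number is at most $b^k \cdot 2b\,e^{-2k\varepsilon^2}$. Summing over $k \leq K$ gives $2b\sum_{k=1}^{K} \left(b\,e^{-2\varepsilon^2}\right)^k$. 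For $\varepsilon$ below a threshold depending only on $b$ the ratio $r = b\,e^{-2\varepsilon^2}$ exceeds $1$, so this geometric sum is dominated by its top term and is $\leq 2b\,\tfrac{r}{r-1}\,r^K$. Writing $e^{-2K\varepsilon^2} = (b^K)^{-2\varepsilon^2/\log b}$ and using $b^K \asymp N$, one has $r^K = b^K e^{-2K\varepsilon^2} \asymp N \cdot N^{-2\varepsilon^2/\log b}$, which yields a bad count of order $C_b\, N^{1-(2/\log b)\varepsilon^2}$ for an explicit constant $C_b$. Choosing $c_b$ slightly smaller than $2/\log b$ absorbs $C_b$, the geometric-series factor, and the leading-digit/top-block corrections, producing the stated bound $N^{1-c_b\varepsilon^2}$.

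I expect the main obstacle to be the honest transfer from the clean i.i.d.\ digit model to the genuine count over $1 \leq n \leq N$ — numbers below $N$ do not all share the same digit-length, the leading digit is constrained, and the top block is truncated — combined with the bookkeeping needed to extract exactly the shape $N^{1-c_b\varepsilon^2}$ while absorbing the multiplicative constants. The concentration step itself is routine; the care lies in verifying that these corrections are genuinely lower order and, for the smallest admissible $\varepsilon$, that the geometric sum is still controlled by its largest term so that the exponent $1 - c_b\varepsilon^2$ is uniform in $N$.
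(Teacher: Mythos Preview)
Your proposal is correct and follows essentially the same approach as the paper: model digits as i.i.d.\ Bernoulli indicators and apply Hoeffding's inequality to obtain the exponent $2\varepsilon^2/\log b$. In fact your version is more careful than the paper's own argument, which simply takes $N=b^k$, applies a one-sided Hoeffding bound for a single digit value, and reads off $|S|\le N^{1-2\varepsilon^2/\log b}$ without explicitly performing the union bound over digit values, the sum over digit-lengths, or the absorption of constants that you flag as the main bookkeeping.
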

\begin{proof} The main idea behind the argument is to replace a combinatorial counting argument by the probabilistic large deviation theory. Let $N\ =\ b^k$, and consider all $k$-digit numbers in base $b$, let $X_i$ be a random variable such that $X_i\ =\ 1$ with probability $1/b$ and 0 otherwise for $1 \leq i \leq k$.  For any given digit $0 \leq d < b$, each $X_i$ gives the probability that this digit will appear in a fixed position $i$ in the base $b$ representation of a number.  Since we are considering $k$-digit numbers, we need to understand the average value of $X_1 + \dots + X_k$ and to analyze how close this average is to $\frac{1}{b}$.  Let $\overline{X}\ =\ \frac{1}{k}(X_1 + \cdots + X_k)$.
Next, we can use Hoeffding's inequality, which gives
\begin{equation}
P\left(\overline{X} - \frac{1}{b}\ \geq\ \epsilon\right)\ \leq\ e^{-2k\epsilon^2}.
\end{equation}
We know that $k \approx \log_b(N)\ =\ \frac{\log(N)}{\log(b)}$, so:
\begin{equation}
e^{-2k\epsilon^2}\ =\ e^{-2\epsilon^2\frac{\log(N)}{\log(b)}}\ =\ (e^{\log(N)})^{-2\epsilon^2\frac{1}{\log(b)}}\ =\ N^{\frac{-2\epsilon^2}{\log(b)}}.
\end{equation}
So, the probability that a number with $k$ digits in its base $b$ representation has some digits that appear more often than the average is less than or equal to $N^{\frac{-2\epsilon^2}{\log(b)}},$ meaning that $|S|\ \leq\ N\cdot N^{\frac{-2\epsilon^2}{\log(b)}}\ =\ N^{1 - \frac{2\epsilon^2}{\log(b)}}.$
\end{proof}

\subsection{Bound on Number of Summands}\label{summand bound section}
Assume now that $f(n)\ =\ f(n-a) + f(a)$ and that this is the optimal representation using the least number of 1's. We assume that $f(n)\ =\ c\log_3(n)$ for some $c > 0$.  Our goal is to derive a bound on $a$.  The main idea is
to show that the logarithmic growth implies that $a$ cannot be very large (otherwise the growth of $f(n)$ would be closer to linear).  Using the lower bound due to Selfridge \cite{G}, we attain:

\begin{equation}
c\log_3(n)\ \geq\ 3(\log_3(n-a) + \log_3(a)).
\end{equation}
This is equivalent to:

\begin{equation}
\log_3(n^{c/3})\ \geq\ \log_3(n-a) + \log_3(a).
\end{equation}

Say that $a\ =\ qn$, where necessarily $q\ \leq\ \frac{1}{2}$.  Then we have:

\begin{equation}
\log_3(n^{c/3})\ \geq\ \log_3((1-q)n\cdot a).
\end{equation}
Exponentiating both sides and simplifying gives

\begin{equation}
\frac{n^{c/3-1}}{1-q}\ \geq\ a.
\end{equation}
Since $q\ \leq\ \frac{1}{2}$, then $1 - q\ \geq\ \frac{1}{2}$, and so

\begin{equation}
\frac{n^{c/3-1}}{1/2}\ \geq\ \frac{n^{c/3-1}}{1-q}\ \geq\ a,
\end{equation}
or:

\begin{equation}
2n^{c/3-1} \ \geq\ \frac{n^{c/3- 1}}{1-q}\ \geq\ a.
\end{equation}

Thus, we need only check for values of $a$ at most $2n^{c/3 - 1}$.
\subsection{Binary Analysis}
To see how our result works, we analyze it in the simplest possible base, which is binary.  Consider $k$-digit numbers less than $N$ (so that $k\ \approx\ \log_2(N)$).  The average case in Guy's method, illustrated in \cite{G} and based on Horner's scheme of representing binary numbers, gives $f(n) \leq 5\log_2(n)/2$, or $f(n) < 3.962407\log_3(n)$.  ``Bad'' numbers in base 2 are those that have many 1's, as that is when the representation is rather inefficient.  If we move away from the average case to numbers which have, say, $75\%$ 1's and $25\%$ 0's, then the constant in Guy's method is 

\begin{equation}
\frac{1}{\log(2)}(3\cdot .75 + 2\cdot .25)\log(3)\ <\ 4.358647.
\end{equation}

This is already much worse than the original average case constant of 3.962407, and so we need to stay much closer to the average case.  In particular, the following percentages of 1's and 0's give the following values for the constant in Guy's method:

\vspace{.75em}
\begin{center}
\begin{tabular}{ |c | c | c| }
\hline
Percent 0's & Percent 1's & Constant\\ \hline
46 & 54 & 4.02581 \\ \hline
47 & 53 & 4.00997 \\ \hline
48 & 52 & 3.99411\\ \hline
49 & 51 & 3.97826 \\ \hline
49.9 & 50.1 & 3.96399 \\ \hline
49.99 & 50.01 & 3.962565 \\  \hline
\end{tabular}
\end{center}
\vspace{.75em}

Consider numbers with at most 46\% 0's and 54\% 1's.  The previous section affords a bound of $a \leq 2n^{4.02581/3 - 1} \leq 2n^{0.342}$ for such numbers.  We want to understand how often this case occurs.  Recall that we are considering $k$-digit numbers.  We need to bound the number of times that 0 occurs at most $\frac{46k}{100}$ times, or the number of times that 1 occurs at least $\frac{54k}{100}$ times.  Say that $X_i$ is the Bernoulli variable corresponding to digit $i$, $1 \leq i \leq k$.  Then $P(X_i = 1) = \frac{1}{2}$.  Let $S_k = X_1 + \cdots + X_k$, so that $S_k$ represents the total number of 1's in our number.  Since $\frac{1}{2} < \frac{54}{100} < 1$, we may apply Theorem 1 from \cite{AG} to achieve the following bound:
\begin{equation}
P\left(S_k \geq \frac{54k}{100}\right) \leq e^{-kD\left(\frac{54}{100} \vert\vert \frac{1}{2}\right)}
\end{equation} 
where 

\begin{equation}
D\left(\frac{54}{100}\ \vert\vert\ \frac{1}{2}\right)\ =\ \frac{54}{100}\log\left(2\left(\frac{54}{100}\right)\right) + \left(1 - \frac{54}{100}\right)\log\left(2\left(1 - \frac{54}{100}\right)\right).
\end{equation}

Because $k\ \approx\ \log_2(N)$, we get that 

\begin{equation}
P\left(S_k \geq \frac{54k}{100}\right)\ \leq\ N^{-D(\frac{54}{100}\ ||\ \frac{1}{2})\frac{1}{\log 2}}.
\end{equation}
In particular, then, there are at most $N^{1 - \frac{1}{\log(2)}\cdot D(\frac{54}{100} \vert\vert \frac{1}{2})} < N^{1 - .004622}$ ``bad'' numbers, i.e. we have the desired bound $a \leq 2n^{0.342}$ for the other $> N^{.004622}$ numbers, which is significant as $N$ grows large.  Call this set of numbers for which we have this bound $\mathcal{U}$.

Following the analysis in \cite{AV}, Arias de Reyna and van de Lune's algorithm has a runtime of $n^{\alpha}$ in base 2 where 
\begin{equation}
\alpha = -1 + \frac{\log(3^{D(2, 0)/3} + 3^{D(2, 1)/3})}{\log(2)} = -1 + \frac{\log(3^{2/3} + 3)}{\log(2)} \approx 1.3448.
\end{equation}

Recall that in their complexity proof, Arias de Reyna and van de Lune denote the number of summands that must be checked for each $n$ by kMax.  Our bound on the numbers in $\mathcal{U}$ compares well to \cite{AV}'s bound in that if kMax were uniform for all numbers in \cite{AV}, our bound would be lower on all $u \in \mathcal{U}$.  More explicitly, in binary, if kMax were uniform, then \cite{AV} would require checking summands up to $\approx n^{0.3448}$ whereas we require checking summands up to $\approx 2n^{0.342}$ for numbers in $\mathcal{U}$.

Unfortunately, kMax is not uniform in this way, and so we cannot claim a definitive improvement with our uniform bound on $\alpha$.  It is possible that some of the $u \in \mathcal{U}$ have a low value of kMax to begin with, and for such numbers our bound may not afford an improvement.  Conversely, it is possible that our bound will improve some numbers that are not in $\mathcal{U}$.  Overall, since kMax is not uniform, it is not easy to theoretically compare our bound to \cite{AV}.  Given this, and given that the ideal bases are much larger than binary (which significantly complicates theoretical analysis), we performed a number of empirical tests to understand how our algorithm compares to \cite{AV} in the general case.


\section{Empirical Calculations}\label{empirical section}
To see whether our method improves J. Arias de Reyna and J. van de Lune's algorithm in practice, we modified J. Arias de Reyna and J. van de Lune's code by adding various precomputations and calculating how many numbers would be improved with these precomputations\footnote{See the ``ExperimentalResults'' folder at https://github.com/kcordwel/Integer-Complexity}.

The first precomputation uses a greedy algorithm due to Steinerberger \cite{St}, which gives that $f(n) \leq 3.66\log_3(n)$ for most $n$.  The recursive algorithm works as follows: if $n \equiv 0 \mod 6$ or $n \equiv 3 \mod 6$, take $n = 3(n/3)$ and run the algorithm on $n/3$.  If $n \equiv 2 \mod 6$ or $n \equiv 4 \mod 6$, take $n = 2(n/2)$ and run the algorithm on $n/2$.  If $n \equiv 1 \mod 6$, take $n = 1 + 3(n-1)/3$ and run the algorithm on $(n-1)/3$.  If $n \equiv 5 \mod 6$, take $n = 1 + 2(n-1)/2$ and run the algorithm on $(n-1)/2$.

The method is as follows: First, run the greedy algorithm on all of the numbers up to some limit and store the results in a dictionary.  Then, use these values to compute a bound on the number of summands for each number (using the formula derived in Section \ref{summand bound section}).  Store a counter that is initialized to 0.  Next, run J. Arias de Reyna and J. van de Lune's algorithm.  For each number, test whether the precomputed summand bound is better than the summand bound in the original algorithm.  If an improvement is found, increment the counter.  When we use this algorithm to precompute summands, we improve 7153 numbers out of the first 200000, or less than 3.6\% of numbers.  If we compute complexities further, up to 2000000, we improve 60864 numbers, or less than 3.05\% of numbers.


We can also combine Steinerberger's algorithm with a stronger algorithm, due to Shriver \cite{Sh}.  Shriver developed a greedy algorithm in base 2310.  If we use the best upper bound on complexities from Shriver and Steinerberger's greedy algorithm, we improve 11188 numbers out of 200000, or about 5.6\% of numbers.  If we compute complexities up to 2000000, we improve 107077 numbers, or less than 5.36\% of numbers.

Shriver conjectures that his best algorithm, which uses simulated annealing, produces a bound of $f(n) \leq 3.529\log_3(n)$ for generic integers.  In fact, only 824 numbers up to 2000000 would be improved by assuming a uniform bound of $f(n) \leq 3.529\log_3(n)$.  Of course, this is a purely theoretical result---if we were to actually introduce a uniform bound, then we would not be able to accurately calculate complexities.  If we become even more optimistic and use a uniform bound of $f(n) \leq 3.5\log_3(n)$, we would only potentially improve 4978 numbers out of the first 2000000.  Similarly, using $f(n) \leq 3.4\log_3(n)$ would improve 124707 numbers of 2000000, which is about 6.23\%.  If we venture significantly below Shriver's conjecture of $3.529\log_3(n)$ and use $f(n) \leq 3.3\log_3(n)$ uniformly, then we start to see a significant difference---we would improve 726756 numbers of 2000000, or about 36\%.

Overall, it seems that Arias de Reyna and van de Lune's algorithm already has a strong bound on the number of summands that are computed.  It is possible that we are encountering difficulties because kMax is not uniform, or it is possible that the complexity of J. Arias de Reyna and J. van de Lune's algorithm is significantly lower than $\mathcal{O}(n^{1.223})$. Thus, while summand precomputing improves the complexity computation for some numbers, given the overhead for performing precomputations and the current speed of J. Arias de Reyna and J. van de Lune's algorithm, introducing a precomputation does not seem to yield an overall improvement to the algorithm.
\section{Progress Towards an Unconditional Upper Bound}
The current unconditional upper bound on complexity, $f(n) \leq 3\log_2(x)$, is derived from applying Guy's method in base 2 to $n$.  In particular, the most complex numbers have binary expansions of the form $11\cdots1_2$ so that at each step, Guy's method requires three 1's.  The resulting representation is of the form $1 + (1 + 1)[1 + (1 + 1)[ \cdots]]$.

Say that $n \text{ mod } 3 \equiv k$.  Instead of applying Guy's method to $n$, what if write $n = k + (1 + 1 + 1)(n - k)/3$ and then apply Guy's method to $(n - k)/3$?  Then in the case where $n = 11\cdots 1_2$, $(n-k)/3$ is either of the form $1010\cdots1$ or $1010\cdots 0$, and applying Guy's method to $(n-k)/3$ gives $f((n-k)/3) \leq 1 + 2.5\log_2(n)$.  Using this, we find that $f(n) \leq 6 + 2.5\log_2(n)$, which is a significant improvement over $f(n) \leq 3\log_2(n)$.

This suggests the following method: If the binary representation of $n$ contains more than a certain percentage of 1's, then write $n$ as $k + (1 + 1 + 1)\cdot (n - k)/3$ and apply Guy's method instead to $(n - k)/3$.  Empirically, in most cases, when the binary expansion of $n$ contains a high percentage of 1's, $(n - k)/3$ has a significantly lower percentage of 1's.  However, there are some examples where this fails.  For example, if $n = 2^{102}-2^{100}-2$, then both the binary expansion of $n$ and the binary expansion of $(n-1)/3$ have a high percentage of 1's.  Notably, if we repeat this division process and consider $((n-1)/3)/3$, then we will obtain a number with a nice binary expansion.  Accordingly, we say that $2^{102}-2^{100}-2$ requires two iterations of division by 3.

Some numbers require numerous iterations of division by 3 before their binary expansions are nice.  For example, $n = 2^{3000} - 2^{2975} - 2^{2807} - 1$ requires nine iterations.  These sorts of counterexamples seem to follow some interesting patterns.   Let $n_i$ denote the number obtained after $i$ iterations of division by 3 so that $n_0 = n$, $n_1 = (n_0 - (n_0 \text{ mod } 3))/3$, etc.  In general, it seems that the number of iterations that are necessary to produce a ``nice'' binary expansion is tied to the number of iterations for which $n \equiv 2 \text{ mod } 3$.  For example, when $n = 2^{3000} - 2^{2975} - 2^{2807} - 1$, then $n_0 \equiv n_1 \equiv n_2 \equiv \cdots \equiv n_7 \equiv 2 \text{ mod } 3$, but $n_8 \equiv 0 \text{ mod } 3$, and $n_9$ has the first ``nice'' binary expansion.

It should be noted that there is no reason to only employ division by 3.  For example, when $n = 2^{3000} - 2^{2975} - 2^{2807} - 1$, $n \text{ mod } 11 \equiv 5$, and $(n - 6)/11$ has a nice binary expansion.  It should be noted that $n \equiv 4 \text{ mod } 5$ and $n \equiv 6 \text{ mod } 7$, and the binary representations of $(n - 4)/5$ and $(n - 6)/7$ both contain a large percentage of 1's.

In general, then, performing this process of division by appropriate numbers before applying Guy's method is a promising strategy for obtaining an improvement on the unconditional upper bound on $f(n)$.
We believe that it could be an interesting problem to make these vague heuristics precise and understand whether this could give rise to a new effective method of giving explicit constructions
of $n$ with sums and products that use few $1$'s.

\section{Acknowledgments}
We would like to thank Professor Arias de Reyna for generously sharing the code that he developed with Professor van de Lune.  Thank you to the SMALL REU program, Williams College, and the Williams College Science Center where the bulk of this work took place. We would like to thank Professor Amanda Folsom for funding from NSF Grant DMS1449679 as well as SMALL REU for funding from NSF Grant DMS1347804, the Williams College Finnerty Fund, and the Clare Boothe Luce Program. The fourth listed author was supported by NSF grants DMS1265673 and DMS1561945, the fifth listed author was supported by Simons Foundation Grant \#360560 and the seventh listed author was supported by NSF Grant DMS1763179 and the Alfred P. Sloan Foundation. Finally, we thank an anonymous referee for suggestions that significantly improved the paper.
\newpage

\newpage

\ \\
\end{document}